\documentclass[a4paper,10.5 pt]{article}

\usepackage[english]{babel}
\usepackage[latin1]{inputenc}
\usepackage[T1]{fontenc}
\usepackage[dvips]{graphicx}
\usepackage{amsmath}
\usepackage{amsthm}
\usepackage{amssymb}
\usepackage{mathrsfs}

\usepackage{geometry}
\usepackage{dsfont}       
\usepackage{stmaryrd}     
\usepackage{subeqnarray}
\usepackage{subfigure}
\usepackage{lipsum}
\usepackage{graphicx,color,epsfig,epstopdf}

\usepackage{array,multirow,makecell}
\setcellgapes{1pt}
\makegapedcells
\newcolumntype{R}[1]{>{\raggedleft\arraybackslash }b{#1}}
\newcolumntype{L}[1]{>{\raggedright\arraybackslash }b{#1}}
\newcolumntype{C}[1]{>{\centering\arraybackslash }b{#1}}

\newtheorem{theorem}{Theorem}

\newtheorem{corollary}{Corollary}[section]

\newtheorem{lemma}{Lemma}[section]

\newtheorem{proposition}{Proposition}[section]
\newtheorem{remark}{Remark}[section]

\numberwithin{equation}{section}

%opening
 \title{\textbf{Special values of generalized multiple Hurwitz zeta function at non-positive integers}}
  \author{
    Boualem SADAOUI \footnote{Universit\'e de Khemis Miliana, Laboratoire LESI, 44225, Khemis Miliana, Alg\'erie. \qquad \qquad \qquad \qquad \qquad \qquad \textbf{E-mail:} sadaouiboualem@gmail.com}  
  }

\begin{document}

\maketitle

\begin{abstract}
In this paper, we provide an alternative method to calculate the values of generalized multiple Hurwitz zeta function at non-positive integers by  means of \emph{Raabe}'s formula and the \textit{Bernoulli} numbers.
\end{abstract}

 \medskip
 {\bf Mathematics Subject Classifications: 11M32; 11M41.} \par
{\bf Key words: Generalized multiple Hurwitz zeta function; integral representation; special values; Bernoulli numbers; Raabe's formula.}

%%%%%%%%%%%%%%%%%%%%%%%%%%%%%%%%%%%%%%%%%%%%%%%%%%%%%%%%%%%%%%%%%%%%%%%%%%%%

\section*{Introduction and notations}
\addcontentsline{toc}{section}{Introduction and notations}
The multiple Hurwitz zeta function is defined by
\begin{equation}
\label{bb}
\zeta_n( \underline{\alpha}; s_{1},\dots, s_{n})=
\sum_{\underline{m}=(m_1,\dots,m_n) \in \mathbb{N}^{n}}{{\frac{1}{(m_{1}+\alpha)^{s_{1}}\dots(m_1+\dots+m_n+\alpha)^{s_n}}}}
\end{equation}
where $\alpha\neq 0,-1,-2,...,$ and  $(s_1,\dots,s_2)s\in \mathbb{C}^n$, 
which introduced by Akiyama and Ishikawa and proved by Akiyama and Ishikawa~\cite{akiyama02}. Matsumoto and Tanigawa proved the analytic continuation of wide class of multiple Dirichlet series and multiple Hurwitz zeta functions in~\cite{matsumoto03} and~\cite{matsumoto2003}, and the analytic continuation of the series~(\ref{bb}) is a special case of~\cite[Theorem 1]{matsumoto03}.

Our main result in this work is the values at non positive integers of the following series
\begin{equation}
\zeta_n(\underline{\alpha}; \underline{s})= \sum_{\underline{m}=(m_1,\dots,m_n) \in \mathbb{N}^{n}}{\prod_{i=1}^{n}{\frac{1}{(m_{1}+\dots +m_{i}+\alpha_i)^{s_{i}}}}}
\end{equation}
where, $\underline{\alpha}=(\alpha_1,\dots,\alpha_n) \in \mathbb{R}^n$ verified some conditions, this series is called the generalized multiple Hurwitz zeta function.

The key of this study is the use of the \emph{Raabe} formula~\cite{friedman04} which expresses the integral in terms of the sum. 

In what follows, for any elements $\underline{\textbf{x}}=(x_1,\dots,x_n)$ and $\underline{\textbf{y}}=(y_1,\dots,y_n)$ of $\mathbb{C}^{n}$ and  $\underline{\textbf{s}}= (s_{1},..., s_{n})$ denote a vector in $\mathbb{C}^{n}$  .

%%%%%%%%%%%%%%%%%%%%%%%%%%%%%%%%%%%%%%%%%%%%%%%%%%%%%%%%%%%%%%%%%%%%%%%%

\section{Main results}

For real numbers  $\underline{\alpha}=(\alpha_1,\dots,\alpha_n) \in \mathbb{R}^n$, such that, for all $1\leq i\leq n$: 
$$\alpha_i\neq 0,-1,-2,...,$$
% we put $\alpha(i)=\sum_{j=1}^{i}\alpha_j$.\\
So, for a complex $n-$tuples $\underline{s}=(s_{1}, \dots, s_{n}) \in \mathbb{C}^{n}$, we define the generalized multiple Hurwitz zeta function by
\begin{eqnarray}
	\label{Zn}
	%\begin{array}{ccc}
	\zeta_n(\underline{\alpha}; \underline{s})&:=& \zeta( \alpha_1,\dots,\alpha_n; s_{1}, \dots, s_{n})\\
	&=& \sum_{\underline{m}=(m_1,\dots,m_n) \in \mathbb{N}^{n}}{\prod_{i=1}^{n}{\frac{1}{(m_{1}+\dots +m_{i}+\alpha_i)^{s_{i}}}}}\\
	&=&\sum_{m_1>\dots>m_n\geq 0}{\prod_{i=1}^{n}{\frac{1}{(m_{i}+\alpha_i)^{s_{i}}}}}
	%\end{array}
\end{eqnarray}
and the corresponding integral function associated to the generalized multiple Hurwitz zeta function by
\begin{equation}
	\label{Yn}
	Y_n(\underline{\alpha}; \underline{s})= \int_{[0, +\infty[^{n}}{\prod_{i=1}^{n}{\frac{1}{(x_{1}+\dots +x_{i}+\alpha_i)^{s_{i}}}} d\underline{x}}.
\end{equation}
\begin{remark}
	We remark that:
	\begin{itemize}
		\item If $\underline{\alpha}=(\alpha,\alpha,\dots,\alpha)$, then the series~(\ref{Zn}) corresponding to the classical multiple Hurwitz zeta function.
		\item If $\underline{\alpha}=(1,1,\dots,1)$, then the series~(\ref{Zn}) corresponding to the multiple zeta function.
	\end{itemize}
\end{remark}

For the meromorphic continuation of the integral~(\ref{Yn}) and the series~(\ref{Zn}), we refer the reader to the work~\cite{matsumoto03}.

We first give well-known elementary result for the integral function.
\begin{lemma}\

\label{theo1}
 Let $\underline{\textbf{N}}= (N_{1},\dots,N_{n})$ be a point of $\mathbb{N}^{n}$,
\newline
\begin{enumerate}
\item[(1)] The point $(\underline{\textbf{s}}=-\underline{\textbf{N}})$ is a polar divisor for the function $Y_n(\underline{\alpha}; \underline{\textbf{s}})$ if and only if there exists a $\underline{k}=(k_2,..., k_n) \in \mathbb{N}^{n-1}$ such that
\begin{equation}
(s_{n}-1)(s_{n}+s_{n-1}-2+k_{n})...\left(\sum_{i=1}^{n}{s_{i}}-n+\sum_{i=2}^{n}{k_{i}}\right)= \prod_{j=1}^{n}{\left(\sum_{i=j}^{n}{s_{i}}-n+j-1+\sum_{i=j+1}^{n}{k_{i}}\right)}=0.
\end{equation}
\item[(2)] If $(\underline{\textbf{s}}=-\underline{\textbf{N}})$ is not a polar divisor for the integral function, then the value of this function at this point exists and is given by
\begin{eqnarray*}
	&Y_n(\underline{\alpha}; \underline{\textbf{s}})= (-1)^{n}&\\ &\sum_{\underline{k}=(k_{2},...,k_{n}) \in \mathbb{N}^{n-1}}{\frac{\left(N_{n}+1 \atop k_{n}\right) \left(N_{n}+N_{n-1}+2-k_{n} \atop k_{n-1}\right)...\left(\sum_{i=2}^{n}{N_{i}}+n-\sum_{i=3}^{n}{k_{i}}\atop k_{2}\right) \alpha_1^{\left(-\sum_{i=1}^{n}{s_{i}}+n-\sum_{i=2}^{n}{k_{i}}\right)}}{\prod_{j=1}^{n} \left(\sum_{i=j}^{n}{N_{i}}+n-j+1-\sum_{i=j+1}^{n}{k_{i}}\right)}\;\prod_{j=2}^{n} \alpha_j^{k_j}}
\end{eqnarray*}
with
\begin{equation*}
T(\underline{\textbf{N}}):= \left\{\underline{k}=(k_{2},...,k_{n}) \in \mathbb{N}^{n-1}: \quad 0\leq k_{j}\leq \sum_{i=j}^{n}{N_{i}}+n-j+1-\sum_{i=j+1}^{n}{k_{i}}, \; \forall\; 2\leq j\leq n\right\}.
\end{equation*}
\end{enumerate}
\end{lemma}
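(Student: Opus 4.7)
My plan is to prove the lemma by induction on $n$, integrating the variables $x_n,x_{n-1},\dots,x_1$ in turn and applying a generalized binomial expansion at each stage to keep the integrand in a form to which the inductive hypothesis applies. The base case $n=1$ is immediate: in its half-plane of absolute convergence, and by analytic continuation elsewhere, $Y_1(\alpha_1;s_1)=\alpha_1^{1-s_1}/(s_1-1)$, whose only polar divisor is $s_1=1$ and whose value at $s_1=-N_1$ is $-\alpha_1^{N_1+1}/(N_1+1)$, matching both parts of the lemma with the empty $\underline{k}$.

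For the inductive step I would work in a region of $\underline{s}$ where Fubini applies. Since $x_n$ appears only in the last factor, $\int_0^\infty (x_1+\cdots+x_n+\alpha_n)^{-s_n}\,dx_n=(x_1+\cdots+x_{n-1}+\alpha_n)^{1-s_n}/(s_n-1)$, contributing the factor $1/(s_n-1)$ — the $j=n$ term in the polar-divisor product of part~(1). Next I would apply a generalized binomial expansion of $(x_1+\cdots+x_{n-1}+\alpha_n)^{1-s_n}$ designed to fuse the new factor with the existing $(x_1+\cdots+x_{n-1}+\alpha_{n-1})^{-s_{n-1}}$ into a single factor of the shape $(x_1+\cdots+x_{n-1}+\alpha_{n-1})^{-(s_{n-1}+s_n+k_n-1)}$, multiplied by a power of $\alpha_n$ with index $k_n$. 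The remaining integrand is then of the form $Y_{n-1}$ at shifted parameters $(s_1,\ldots,s_{n-2},s_{n-1}+s_n+k_n-1)$, so the inductive hypothesis applies. At $\underline{s}=-\underline{N}$, the binomial $\binom{1-s_n}{k_n}=\binom{N_n+1}{k_n}$ vanishes for $k_n>N_n+1$, cutting the series to a finite sum and producing the first layer of the range $T(\underline{N})$; propagating the truncation through the recursion supplies the rest. The denominator product of part~(2) is assembled one factor at a time — the $j$-th integration step contributing $1/(\sum_{i\ge j}s_i-n+j-1+\sum_{i>j}k_i)$ — and part~(1) follows because these are the only denominators produced.

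The principal obstacle is combinatorial bookkeeping: tracking the nested binomial coefficients and shifted parameters as the recursion unwinds, and verifying that they reassemble precisely into $\binom{N_n+1}{k_n}\binom{N_{n-1}+N_n+2-k_n}{k_{n-1}}\cdots\binom{\sum_{i=2}^n N_i+n-\sum_{i\ge 3}k_i}{k_2}$ together with the prescribed exponents on the $\alpha_j$'s and the overall sign $(-1)^n$ coming from the $n$ denominator factors evaluated at $\underline{s}=-\underline{N}$. A secondary issue is analytic: the binomial expansion need not converge pointwise on the whole integration domain, so one must justify the interchange of sum and integral in a convergent subregion of $\underline{s}$-space and obtain the formula at $\underline{s}=-\underline{N}$ by analytic continuation.
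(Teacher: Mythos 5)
Your plan is essentially the paper's own proof: the paper likewise integrates the innermost variable to extract the factor $1/(s_n-1)$, applies the generalized binomial expansion $\left(1+\frac{\alpha_n}{\,\cdot\,}\right)^{-s_n+1}=\sum_{k_n}\binom{-s_n+1}{k_n}\alpha_n^{k_n}(\,\cdot\,)^{-k_n}$ to fold the resulting power into the next factor, recurses down to $j=1$ accumulating the denominators $\sum_{i\geq j}s_i-n+j-1+\sum_{i>j}k_i$, and then sets $\underline{\textbf{s}}=-\underline{\textbf{N}}$, where the vanishing of $\binom{N_n+1}{k_n}$ for $k_n>N_n+1$ (and its analogues at each level) truncates the series to the finite set $T(\underline{\textbf{N}})$ and the denominators identify the polar divisors; the analytic issue you flag is handled exactly as you propose, by working in the region $\Re(s_n)>1$, $\Re\bigl(\sum_{i\geq j}s_i\bigr)-n+j-1+\sum_{i>j}k_i>0$ and continuing analytically. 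The one concrete point where your sketch would go astray if executed literally: after integrating $x_n$ the new factor is $(x_1+\cdots+x_{n-1}+\alpha_n)^{1-s_n}$, and expanding it about $x_1+\cdots+x_{n-1}+\alpha_{n-1}$ produces powers of $\alpha_n-\alpha_{n-1}$, not of $\alpha_n$ as you assert. The paper avoids this by first performing a change of variables ($y_i=x_1+\cdots+x_i+\alpha_i$ followed by a recentering to variables $z_i$) arranged so that consecutive factors differ by exactly $\alpha_n$, which is what makes the expansion parameter $\alpha_n$ itself and yields the stated monomials $\prod_j\alpha_j^{k_j}$; without some such normalization your recursion delivers $(\alpha_j-\alpha_{j-1})^{k_j}$ in their place, so you should either adopt that substitution or justify why the two bookkeepings agree.
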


We give now a similar result for the generalized multiple Hurwitz zeta function.
\begin{theorem}\

\label{theo2}
 Let $\underline{\textbf{N}}= (N_{1},\dots,N_{n})$ a point of $\mathbb{N}^{n}$, if  the point $(\underline{\textbf{s}}=-\underline{\textbf{N}})$ is not a polar divisor for the integral function $Y_n(\underline{\alpha}; \underline{\textbf{s}})$, then the value of the generalized multiple Hurwitz zeta function $\zeta_n(\underline{\alpha}; \underline{\textbf{s}})$ at the point $(\underline{\textbf{s}}=-\underline{\textbf{N}})$ exists and is given by
\begin{equation}
\begin{array}{ccc}
&\zeta_n(\underline{\alpha}; -\underline{\textbf{N}})=(-1)^n&\\
& \sum_{\underline{k}=(k_2,...,k_n) \in \mathbb{N}^{n-1}}{\sum_{\underline{v}=(v_1,...,v_n)\in \mathbb{N}^{n}\atop v_j\leq k_j\; \forall\; 2\leq j\leq n; v_1\leq \left(\sum_{i=1}^{n}{N_{i}}+n-\sum_{i=2}^{n}{k_{i}}\right)}A(-\underline{\textbf{N}})\; B_{\underline{v}} \prod_{j=1}^{n}{\frac{1 }{\left(\sum_{i=j}^{n}{N_{i}}+n-j+1-\sum_{i=j+1}^{n}{k_{i}}\right)}}}
\end{array}
\end{equation}
with
\begin{equation}
A(-\underline{\textbf{N}})= \left(\sum_{i=1}^{n}{N_{i}}+n-\sum_{i=2}^{n}{k_{i}}\atop v_1\right) \alpha^{\left(\sum_{i=1}^{n}{N_{i}}+n-\sum_{i=2}^{n}{v_{i}}\right)}\prod_{j=2}^{n}{\left(\sum_{i=j}^{n}{N_{i}}+n-j+1-\sum_{i=j+1}^{n}{k_{i}} \atop k_{j} \right)\left(k_j \atop v_j\right) }.
\end{equation}
and
\begin{equation*}
T(\underline{\textbf{N}}):= \left\{\underline{k}=(k_2,...,k_n) \in \mathbb{N}^{n-1}: \quad 0\leq k_{j}\leq \sum_{i=j}^{n}{N_{i}}+n-j+1-\sum_{i=j+1}^{n}{k_{i}}, \; \forall\; 2\leq j\leq n\right\}.
\end{equation*}
and
\begin{equation*}
B_{\underline{v}}= \prod_{j=1}^{n}{B_{v_{j}}}
\end{equation*}
where $B_{v_{j}}$ is the ${v_{j}}^{-th}$ Bernoulli number.
\end{theorem}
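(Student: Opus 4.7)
The plan is to derive Theorem~\ref{theo2} from Lemma~\ref{theo1} by substituting, inside the closed form of $Y_n(\underline{\alpha};-\underline{\textbf{N}})$, each factor $\alpha_j^{k_j}$ (for $2\leq j\leq n$) and the outer factor $\alpha_1^{\sum_{i}N_i+n-\sum_{i\geq 2}k_i}$ by the corresponding Bernoulli polynomial evaluated at $\alpha_j$. The atomic identity at the heart of the proof is the one-variable evaluation $\zeta(-N,\alpha)=-B_{N+1}(\alpha)/(N+1)$ together with $Y_1(\alpha;-N)=-\alpha^{N+1}/(N+1)$; both are straightforward consequences of Raabe's formula $\int_0^1 \zeta(s,x+\alpha)\,dx=\alpha^{1-s}/(s-1)$ from~\cite{friedman04} combined with the meromorphic continuation of the Hurwitz zeta function. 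Expanding $B_{N+1}(\alpha)=\sum_{v=0}^{N+1}\binom{N+1}{v}B_v\alpha^{N+1-v}$ exhibits the replacement rule $\alpha^M\mapsto \sum_{v=0}^{M}\binom{M}{v}B_v\alpha^{M-v}$ which converts $Y_1(\alpha;-N)$ into $\zeta_1(\alpha;-N)$; the full theorem will be shown to be an $n$-fold iteration of this rule.

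For the general $n$, I would induct on $n$ after writing
\begin{equation*}
\zeta_n(\underline{\alpha};\underline{\textbf{s}}) = \sum_{m_1\geq 0}\frac{1}{(m_1+\alpha_1)^{s_1}}\,\zeta_{n-1}\bigl(m_1+\alpha_2,\ldots,m_1+\alpha_n;\, s_2,\ldots, s_n\bigr).
\end{equation*}
At $\underline{\textbf{s}}=-\underline{\textbf{N}}$ the inductive hypothesis turns the inner $\zeta_{n-1}(m_1+\alpha_2,\ldots,m_1+\alpha_n;-N_2,\ldots,-N_n)$ into a polynomial in the shifted variables whose coefficients are built from Bernoulli numbers $B_{v_j}$ and binomials $\binom{k_j}{v_j}$ for $j\geq 2$. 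The remaining outer sum in $m_1$ of $(m_1+\alpha_1)^{N_1}$ times this polynomial is handled by the atomic identity applied term by term, and reproduces the outermost binomial $\binom{\sum_{i}N_i+n-\sum_{i\geq 2}k_i}{v_1}$, the Bernoulli factor $B_{v_1}$ and the outer denominator $\sum_{i=1}^{n}N_i+n-\sum_{i\geq 2}k_i$ occurring in $A(-\underline{\textbf{N}})$. The base $n=1$ is exactly the atomic identity recorded above.

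The main obstacle is combinatorial rather than analytic: each peeling step produces Bernoulli corrections that depend on the still-active variables $m_2,\ldots,m_n$ through the shift $\alpha_i\mapsto\alpha_i+m_1$, and one must verify that the accumulated corrections recombine into the single product $B_{\underline{v}}=\prod_{j}B_{v_j}$ with precisely the binomial prefactors encoded in $A(-\underline{\textbf{N}})$ and the denominators $\prod_j (\sum_{i\geq j}N_i+n-j+1-\sum_{i>j}k_i)$ produced by Lemma~\ref{theo1}. This requires a careful reindexing of the iterated sums over $\underline{k}$ and $\underline{v}$, together with the convolution identity for Bernoulli polynomials to regroup the terms. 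A minor point is to confirm that the ``non-polar divisor'' hypothesis on $-\underline{\textbf{N}}$ is preserved at every inductive stage, which is transparent from the explicit criterion in part~(1) of Lemma~\ref{theo1}.
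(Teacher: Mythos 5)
Your route is genuinely different from the paper's, but as written it has a real gap. The paper never peels off variables one at a time: it proves an explicit closed form for the deformed integral $Y_{n,\underline{a}}(\underline{\alpha};-\underline{\textbf{N}})$ as a polynomial in $\underline{a}$ (Proposition~\ref{inter-estim}), invokes the $n$-dimensional \emph{Raabe} formula $Y_{n}(\underline{\alpha};\underline{\textbf{s}})=\int_{[0,1]^{n}}\zeta_{n,\underline{\textbf{t}}}(\underline{\alpha};\underline{\textbf{s}})\,d\underline{\textbf{t}}$, and then applies the Friedman--Pereira inversion lemma, which converts each monomial $\prod_i a_i^{L_i}$ of $Y_{n,\underline{a}}$ into $\prod_i B_{L_i}(a_i)$ for $\zeta_{n,\underline{a}}$ in one global step; setting $\underline{a}=0$ gives the theorem. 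The whole point of that machinery is that the only analytic input is the single integral identity of Raabe, valid by analytic continuation outside the polar divisors, after which everything is an identity between two polynomials in $\underline{a}$.

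The gap in your plan is the induction step itself. The decomposition $\zeta_n(\underline{\alpha};\underline{\textbf{s}})=\sum_{m_1\geq 0}(m_1+\alpha_1)^{-s_1}\zeta_{n-1}(m_1+\alpha_2,\dots,m_1+\alpha_n;s_2,\dots,s_n)$ is an identity of convergent series only in the domain of absolute convergence. At $\underline{\textbf{s}}=-\underline{\textbf{N}}$ the left-hand side is defined by meromorphic continuation, the inner factor (after applying the inductive hypothesis) is a polynomial in $m_1$ of degree $\sum_{i\geq 2}N_i+n-1$, and the outer sum is wildly divergent. Replacing $\sum_{m_1}(m_1+\alpha_1)^{M}$ term by term with $\zeta(-M,\alpha_1)=-B_{M+1}(\alpha_1)/(M+1)$ is a regularization prescription, not a consequence of anything you have established: you must prove that continuing first in $(s_2,\dots,s_n)$ inside the sum over $m_1$ and then in $s_1$ reproduces the value of the $n$-variable continuation at the regular point $-\underline{\textbf{N}}$. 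That requires uniform asymptotics of $\zeta_{n-1}(m_1+\alpha_2,\dots;s_2,\dots,s_n)$ as $m_1\to\infty$, locally uniformly in $(s_2,\dots,s_n)$ near $(-N_2,\dots,-N_n)$, i.e.\ a Mellin--Barnes or Euler--Maclaurin argument in the style of Akiyama--Egami--Tanigawa or Matsumoto; none of this appears in your sketch, and your remark that ``the main obstacle is combinatorial rather than analytic'' inverts where the difficulty actually lies. (The combinatorial recombination of the $\underline{k}$, $\underline{v}$ sums is also only announced, not carried out, so even granting the analytic step the argument is a plan rather than a proof; your observation that the non-polar-divisor hypothesis passes to the inner $(n-1)$-fold function is, however, correct.)
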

%\subsubsection*{Proof of Theorem~\ref{theo2} in the case $n=1$:}

%In this part, we give the proof of our main result for $n=1$ as a warm up for the proof for larger $n$.
%We have
%\begin{equation}
%\zeta(s)= \sum_{n>0}{\frac{1}{n^{s}}}= Z(s).
%\end{equation}
%For $a \in \mathbb{R}_{+}$ we set
%\begin{equation}
%Y_{a}(s)= \int_{1}^{+\infty}{(x+a)^{-s} dx}
%\end{equation}
%which for $\Re(s)>1$ reads
%\begin{equation}
%Y_{a}(s)= \int_{1}^{+\infty}{(x+a)^{-s} dx}= \frac{(1+a)^{-s+1}}{s-1}
%\end{equation}
%Thus, for all $N \in \mathbb{N}$:
%\begin{equation}
%Y_{a}(-N)= -\frac{(1+a)^{N+1}}{N+1}= \frac{-1}{N+1}\sum_{k=0}^{N+1}{\left(N+1 \atop k\right) a^{k}}.
%\end{equation}
%Then, Proposition~\ref{pro4} of Section~\ref{4} shows that
%\begin{equation}
%\zeta(-N)= Z(-N)= \frac{-1}{N+1}\sum_{k=0}^{N+1}{\left(N+1 \atop k\right) B_{k}}
%\end{equation}
%where,  $B_{k}$ is the $k^{-th}$ Bernoulli number, which ends the proof of Theorem~\ref{theo2} for $n=1$.
%\\
%Now, we recall the elementary result
%\begin{equation}
%(N+1) B_{N}= -\sum_{k=0}^{N-1}{\left(N+1 \atop k\right) B_{k}}.
%\end{equation}
%Finally, we obtain the known result
%\begin{equation}
%\zeta(-N)= Z(-N)= -\frac{B_{N+1}}{N+1}.
%\end{equation}

%%%%%%%%%%%%%%%%%%%%%%%%%%%%%%%%%%%%%%%%%%%%%%%%%%%%%%%%%%%%%%%%%%%%%%%%%

\section{Proof of lemma~\ref{theo1}}

	Let the integral function
	\begin{equation}
	\label{equa11}
	Y_n(\underline{\alpha}; \underline{\textbf{s}})=\int_{[0, +\infty[^{n}} {\prod_{i=1}^{n}{(x_{1}+ \dots+ x_{i}+\alpha_i)^{- s_{i}}}\; d{\underline{x}}}.
	\end{equation}
	If we use the following change of variables:
	\begin{equation}
	y_i=x_1+\dots+x_i+\alpha_i
	\end{equation}
	for all $1\leq i\leq n$, we find
	\begin{equation}
	%\label{equa11}
		Y_n(\underline{\alpha}; \underline{\textbf{s}})=\int_{\prod_{i=1}^{n}[\alpha_i, +\infty[^{n}} {\prod_{i=1}^{n}{(y_{1}+ \dots+ y_{i})^{- s_{i}}}\; d{\underline{x}}}.
	\end{equation}
	Now, using the following change of variables:
	\begin{equation}
	z_{i}= y_{1}+\dots+y_{i}-\sum_{j=2}^{i}\alpha_j
	\end{equation}
	for all $1\leq i\leq n$.
	This change gives
	\begin{equation}
	\left\{\begin{array}{ccc}	
		y_{1}=z_{1}&\\
		y_{i}= z_{i}-z_{i-1}+\alpha_i,& \forall\; 2\leq i\leq n
	\end{array}\right.
	\end{equation}
	
	Since $\underline{y}= (y_{1},..., y_{n}) \in \prod_{i=1}^{n}[\alpha_i, +\infty[$, this gives
	\begin{equation}
	\underline{z} \in V_{n}= \left\{\underline{z} \in \mathbb{R}^{n}:\quad \alpha_1\leq z_{1}\leq z_{2}\leq \dots\leq z_{n} \right\}
	\end{equation}
	and, we find
	\begin{equation}
	Y_n(\underline{\alpha};  \underline{\textbf{s}})=\int_{V_{n}} {\prod_{i=1}^{n}{(z_{i}+\sum_{j=2}^{i}\alpha_j)^{- s_{i}}}\; d{\underline{z}}}.
	\end{equation}
	This integral can be rewritten as follows.
	\begin{equation}
	Y_n(\underline{\alpha};  \underline{\textbf{s}})=\int_{V_{n-1}} {\prod_{i=1}^{n-1}{(z_{i}+\sum_{j=2}^{i}\alpha_j)^{- s_{i}}}\;\left(\int_{z_{n-1}}^{+\infty}(z_{n}+\sum_{j=2}^{n}\alpha_j)^{- s_{n}}\;dz_{n}\right) dz_{1}...dz_{n-1}}
	\end{equation}
	with
	\begin{equation}
	\begin{array}{ccc}
	\int_{z_{n-1}}^{+\infty}{(z_{n}+\sum_{j=2}^{n}\alpha_j)^{- s_{n}}\; dz_{n}}&=& \frac{(z_{n-1}+\sum_{j=2}^{n-1}\alpha_j)^{-s_{n}+1}}{s_{n}-1}\left(1+\frac{\alpha_n}{z_{n-1}+\sum_{j=2}^{n-1}\alpha_j}\right)^{-s_{n}+1}\\
	&=&\sum_{k_{n}\in \mathbb{N}}{\left(-s_{n}+1 \atop k_{n}\right) \frac{(z_{n-1}+\sum_{j=2}^{n-1}\alpha_j)^{-s_{n}+1-k_{n}}}{s_{n}-1}\; \alpha_n^{k_n}}
	\end{array}
	\end{equation}
	if and only if $\Re(s_{n})-1>0$.
	\newline
	Inductively on $n$, we find
	\begin{eqnarray*}
Y_n(\underline{\alpha}; \underline{\textbf{s}})&=& \sum_{\underline{k}=(k_{2},...,k_{n}) \in \mathbb{N}^{n-1}}{\frac{\left(-s_{n}+1 \atop k_{n}\right) \left(-s_{n}-s_{n-1}+2-k_{n} \atop k_{n-1}\right)...\left(-\sum_{i=2}^{n}{s_{i}}+n-\sum_{i=3}^{n}{k_{i}}\atop k_{2}\right) \alpha_1^{\left(-\sum_{i=1}^{n}{s_{i}}+n-\sum_{i=2}^{n}{k_{i}}\right)}}{(s_{n}-1)(s_{n}+s_{n-1}-2+k_{n})...\left(\sum_{i=1}^{n}{s_{i}}-n+\sum_{i=2}^{n}{k_{i}}\right)}\;\prod_{j=2}^{n} \alpha_j^{k_j}}\\
%&=& \sum_{\underline{k}=(k_{2},...,k_{n}) \in \mathbb{N}^{n-1}}{\frac{\left(-s_{n}+1 \atop k_{n}\right) \left(-s_{n}-s_{n-1}+2-k_{n} \atop k_{n-1}\right)...\left(-\sum_{i=2}^{n}{s_{i}}+n-\sum_{i=3}^{n}{k_{i}}\atop k_{2}\right) \alpha^{\left(-\sum_{i=1}^{n}{s_{i}}+n\right)}}{(s_{n}-1)(s_{n}+s_{n-1}-2+k_{n})...\left(\sum_{i=1}^{n}{s_{i}}-n+\sum_{i=2}^{n}{k_{i}}\right)}}
	\end{eqnarray*}
	if and only if for all $1\leq i\leq n-1$
	\begin{equation}
	\Re\left(\sum_{i=1}^{n}{s_{i}}\right)-n+j-1+\sum_{i=2}^{n}{k_{i}}>0
	\end{equation}
	and  
	\begin{equation}
	\Re(s_{n})-1>0.
	\end{equation}
	\newline
	Therefore, for any point $\underline{\textbf{N}}=(N_{1},\dots,N_{n}) \in \mathbb{N}^{n}$
	\begin{enumerate}
		\item[1)] The point $(\underline{\textbf{s}}= -\underline{\textbf{N}})$ is a polar divisor for the function $Y_n(\underline{\alpha};  \underline{\textbf{s}})$ if there exists a $\underline{k}=(k_{2},...,k_{n}) \in \mathbb{N}^{n-1}$ such that
		\begin{equation}
		(s_{n}-1)(s_{n}+s_{n-1}-2+k_{n})...\left(\sum_{i=1}^{n}{s_{i}}-n+\sum_{i=2}^{n}{k_{i}}\right)= \prod_{j=1}^{n}{\left(\sum_{i=j}^{n}{s_{i}}-n+j-1+\sum_{i=j+1}^{n}{k_{i}}\right)}=0.
		\end{equation}
		\item[2)] If $(\underline{\textbf{s}}=-\underline{\textbf{N}})$ is not a polar divisor we get
		\begin{equation}
		\left(N_{n}+1 \atop k_{n}\right)...\left(\sum_{i=2}^{n}{N_{i}}+n-\sum_{i=3}^{n}{k_{i}}\atop k_{2}\right)= \prod_{j=2}^{n}{\left(\sum_{i=j}^{n}{N_{i}}+n-j+1-\sum_{i=j+1}^{n}{k_{i}} \atop k_{j} \right)}=0 
		\end{equation}
		if and only if there exists an $\underline{k}=(k_2,...,k_n) \in \mathbb{N}^{n-1}$ and $2\leq j\leq n$, such that
		\begin{equation*}
		k_{j} > \sum_{i=j}^{n}{N_{i}}+n-j+1-\sum_{i=j+1}^{n}{k_{i}}.
		\end{equation*}
		Let 
		\begin{equation}
		T(\underline{\textbf{N}}):= \left\{\underline{k}=(k_{2},...,k_{n}) \in \mathbb{N}^{n-1}: \quad 0\leq k_{j}\leq \sum_{i=j}^{n}{N_{i}}+n-j+1-\sum_{i=j+1}^{n}{k_{i}}, \; \forall\; 2\leq j\leq n\right\}
		\end{equation}
		which is finite, then
		\begin{eqnarray*}
&Y_n(\underline{\alpha}; \underline{\textbf{s}})= (-1)^{n}&\\ &\sum_{\underline{k}=(k_{2},...,k_{n}) \in \mathbb{N}^{n-1}}{\frac{\left(N_{n}+1 \atop k_{n}\right) \left(N_{n}+N_{n-1}+2-k_{n} \atop k_{n-1}\right)...\left(\sum_{i=2}^{n}{N_{i}}+n-\sum_{i=3}^{n}{k_{i}}\atop k_{2}\right) \alpha_1^{\left(-\sum_{i=1}^{n}{s_{i}}+n-\sum_{i=2}^{n}{k_{i}}\right)}}{\prod_{j=1}^{n} \left(\sum_{i=j}^{n}{N_{i}}+n-j+1-\sum_{i=j+1}^{n}{k_{i}}\right)}\;\prod_{j=2}^{n} \alpha_j^{k_j}}
%	&=&(-1)^{n}\alpha^{\left(\sum_{i=1}^{n}{N_{i}}+n\right)}\sum_{\underline{k} \in T(\underline{\textbf{N}})}{{\frac{\prod_{j=2}^{n} \left(\sum_{i=j}^{n}{N_{i}}+n-j+1-\sum_{i=j+1}^{n}{k_{i}} \atop k_{j} \right)}{\prod_{j=1}^{n} \left(\sum_{i=j}^{n}{N_{i}}+n-j+1-\sum_{i=j+1}^{n}{k_{i}}\right)}}}
		\end{eqnarray*}
	\end{enumerate}

%%%%%%%%%%%%%%%%%%%%%%%%%%%%%%%%%%%%%%%%%%%%%%%%%%%%%%%%%%%%%%%%%

\section{An intermediate approximation}
For $\underline{a}=(a_1,...,a_n) \in \mathbb{R}_{+}^{n}$ and  $\underline{\textbf{s}}=(s_{1},...,s_{n}) \in \mathbb{C}^{n}$, we define the function
\begin{equation}
Y_{n,\underline{a}}(\underline{\alpha};  \underline{\textbf{s}})= \int_{\prod_{i=1}^{n}[\alpha_i,+\infty[^{n}}{\prod_{i=1}^{n}{(x_{1}+...+x_{i}+a_{1}+...+a_{i})^{-s_i}}\; d\underline{x}}.
\end{equation}
We prove the following useful result.
\begin{proposition}
\label{inter-estim}
Let $\underline{\textbf{N}}=(N_1,..., N_n)$ a point of $\mathbb{N}^{n}$, then we have for $\underline{a} \in \mathbb{R}^{+}$
\begin{equation}
\label{Ya}
\begin{array}{ccc}
&Y_{n,\underline{a}}(\underline{\alpha};  -\underline{\textbf{N}})=(-1)^n&\\
& \sum_{\underline{k}=(k_2,...,k_n) \in \mathbb{N}^{n-1}}{\sum_{\underline{v}=(v_1,...,v_n)\in \mathbb{N}^{n}\atop v_j\leq k_j\; \forall\; 2\leq j\leq n; v_1\leq \left(-\sum_{i=1}^{n}{s_{i}}+n-\sum_{i=2}^{n}{k_{i}}\right)}\frac{A(-\underline{\textbf{N}})\; a_{1}^{v_1}}{\left(\sum_{i=1}^{n}{N_{i}}+n-\sum_{i=2}^{n}{k_{i}}\right)}\prod_{j=2}^{n}{\frac{ a_j^{v_j}}{\left(\sum_{i=j}^{n}{N_{i}}+n-j+1-\sum_{i=j+1}^{n}{k_{i}}\right)}}}.
\end{array}
\end{equation}
with
\begin{equation}
A(-\underline{\textbf{N}})= \left(\sum_{i=1}^{n}{N_{i}}+n-\sum_{i=2}^{n}{k_{i}}\atop v_1\right) \alpha_1^{\left(\sum_{i=1}^{n}{N_{i}}+n-\sum_{i=2}^{n}{v_{i}}\right)} \prod_{j=2}^{n}{\left(\sum_{i=j}^{n}{N_{i}}+n-j+1-\sum_{i=j+1}^{n}{k_{i}} \atop k_{j} \right)\left(k_j \atop v_j\right) \alpha_j^{k_j-v_j}}.
\end{equation}
and
\begin{equation}
T(\underline{\textbf{N}}):= \left\{\underline{k}=(k_2,...,k_n) \in \mathbb{N}^{n-1}: \quad 0\leq k_{j}\leq \sum_{i=j}^{n}{N_{i}}+n-j+1-\sum_{i=j+1}^{n}{k_{i}}, \; \forall\; 2\leq j\leq n\right\}
\end{equation}
\end{proposition}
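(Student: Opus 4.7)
The plan is to follow the same inductive integration strategy as in the proof of Lemma~\ref{theo1}, but to carry along one extra binomial expansion at each step that separates the $\alpha_j$- and $a_j$-contributions. I would first perform the change of variables $y_i = x_1 + \dots + x_i + a_1 + \dots + a_i$, $1 \leq i \leq n$; its Jacobian is one and it maps the original product domain onto the nested region
\begin{equation*}
V'_n = \bigl\{ \underline{y} \in \mathbb{R}^n :\; \alpha_1 + a_1 \leq y_1,\; y_{i-1} + \alpha_i + a_i \leq y_i \text{ for } 2 \leq i \leq n\bigr\},
\end{equation*}
with integrand $\prod_i y_i^{-s_i}$. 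This is structurally identical to the situation reached after the substitutions in the proof of Lemma~\ref{theo1}, except that each shift $\alpha_i$ appearing in the lower limits is now replaced by $\alpha_i + a_i$.

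I would then integrate $y_n, y_{n-1}, \dots, y_2$ in turn. At step $j \geq 2$, integration from $y_{j-1} + \alpha_j + a_j$ to $+\infty$ yields a boundary term of the form $(y_{j-1} + \alpha_j + a_j)^{c_j(\underline{s}, \underline{k})}$ divided by one of the linear factors in $\underline{s}$ already identified in Lemma~\ref{theo1}. A first binomial expansion in $(\alpha_j + a_j)/y_{j-1}$ introduces the index $k_j$ together with a factor $(\alpha_j + a_j)^{k_j}$; a second application of the binomial theorem,
\begin{equation*}
(\alpha_j + a_j)^{k_j} = \sum_{v_j=0}^{k_j} \binom{k_j}{v_j} \alpha_j^{k_j-v_j} a_j^{v_j},
\end{equation*}
produces the new index $v_j$ and the factor $\binom{k_j}{v_j}\alpha_j^{k_j-v_j} a_j^{v_j}$ of $A(-\underline{\textbf{N}})$. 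The final integration of $y_1$ from $\alpha_1 + a_1$ to $+\infty$, specialised to $\underline{\textbf{s}} = -\underline{\textbf{N}}$, produces a power $(\alpha_1 + a_1)^M$ with $M = \sum_i N_i + n - \sum_{i \geq 2} k_i$; one further binomial expansion $(\alpha_1 + a_1)^M = \sum_{v_1=0}^M \binom{M}{v_1} \alpha_1^{M-v_1} a_1^{v_1}$ generates the remaining index $v_1$ and the remaining $\alpha_1$- and $a_1$-factors of $A(-\underline{\textbf{N}})$.

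At $\underline{\textbf{s}} = -\underline{\textbf{N}}$ the binomial coefficients $\binom{\sum_{i \geq j} N_i + n - j + 1 - \sum_{i > j} k_i}{k_j}$ force $\underline{k} \in T(\underline{\textbf{N}})$ exactly as in Lemma~\ref{theo1}, and each $v_j$ is automatically bounded as in the statement, so both summations become finite. Collecting the multiplicative factors accumulated along the induction, together with the overall sign $(-1)^n$ coming from the $n$ integrations, should yield the claimed formula. The main obstacle I anticipate is purely combinatorial bookkeeping: tracking the compound binomial coefficients and the exponents of $\alpha_j$, $a_j$ and $y_{j-1}$ through $n$ iterations of the induction and verifying that they assemble term-by-term into $A(-\underline{\textbf{N}})$ and the denominators displayed in the statement.
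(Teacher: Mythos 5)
Your proposal follows essentially the same route as the paper: iterated integration introducing the indices $k_j$ via a binomial expansion in $(\alpha_j+a_j)$ over the accumulated sum (the paper imposes exactly your implicit condition $(\alpha_j+a_j)/(x_1+\dots+x_{j-1}+a_1+\dots+a_{j-1})<1$), followed by the binomial theorem on $(\alpha_j+a_j)^{k_j}$ and $(\alpha_1+a_1)^{M}$ to split off the $a_j^{v_j}$ factors, and finally the specialisation $\underline{\textbf{s}}=-\underline{\textbf{N}}$. The only cosmetic differences are that you make the change of variables to the nested region explicit and interleave the second binomial expansion with the induction, whereas the paper works directly in the $x$-coordinates and performs all the $(\alpha_j+a_j)^{k_j}$ expansions at the end.
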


\begin{proof}
Let $\underline{a} \in \mathbb{R}^{n}_{+}$, such that for all $\underline{x}=(x_1,...,x_n) \in [\alpha, +\infty[^{n}$ and   
for all $1\leq i\leq n$
\begin{equation}
\label{cond1}
\frac{\alpha_i+a_i}{x_1+...+x_{i-1}+a_1+...+a_{i-1}}<1,
\end{equation} 
we have 
\begin{equation}
Y_{n,\underline{a}}(\underline{\alpha};  \underline{\textbf{s}})= \int_{\prod_{i=1}^{n}[\alpha_i,+\infty[^{n}}{\prod_{i=1}^{n}{(x_{1}+...+x_{i}+a_{1}+...+a_{i})^{-s_i}}\; d\underline{x}}.
\end{equation}
This integral can be written as follows
\begin{eqnarray*}
&Y_{n,\underline{a}}(\underline{\alpha};  \underline{\textbf{s}})=&\\ &\int_{\prod_{i=1}^{n-1}[\alpha_i,+\infty[^{n-1}}{\prod_{i=1}^{n-1}{(x_{1}+...+x_{i}+a_{1}+...+a_{i})^{-s_i}}\;\left(\int_{\alpha_n}^{+\infty}{(x_1+...+x_n+a_1+...+a_n)^{-s_n}\; dx_n}\right) dx_1...dx_{n-1}}\\
\end{eqnarray*}
Since for $\Re(s_n)>1$ we have
\begin{equation}
\int_{\alpha_n}^{+\infty}{(x_1+...+x_n+a_1+...+a_n)^{-s_n}\; dx_n}= \frac{(x_1+...+x_{n-1}+a_1+...a_{n-1}+\alpha_n+a_n)^{-s_n+1}}{s_n-1}
\end{equation}
condition~(\ref{cond1}) yields
\begin{equation}
\int_{\alpha_n}^{+\infty}{(x_1+...+x_n+a_1+...+a_n)^{-s_n}\; dx_n}= \sum_{k_n \in \mathbb{N}}\left(-s_n+1 \atop k_n\right)\frac{(\alpha_n+a_n)^{k_n}}{s_n-1} (x_1+...+x_{n-1}+a_1+...a_{n-1})^{-s_n+1-k_n}.
\end{equation}
If for $1\leq j\leq n-1$
\begin{equation}
\left(\sum_{i=j}^{n}{\Re(s_{i})}-n+j-1+\sum_{i=j+1}^{n}{k_{i}}\right)>0
\end{equation}
then inductively we find
\begin{equation}
\begin{array}{ccc}
&Y_{n,\underline{a}}(\underline{\alpha};  \underline{\textbf{s}})= (-1)^n&\\
& \sum_{\underline{k}=(k_2,...,k_n) \in \mathbb{N}^{n-1}}{\frac{(\alpha_1+a_1)^{-\sum_{i=1}^{n}{s_{i}}+n-\sum_{i=2}^{n}{k_{i}}}}{\left(-\sum_{i=1}^{n}{s_{i}}+n-\sum_{i=2}^{n}{k_{i}}\right)}\prod_{j=2}^{n}{\left(-\sum_{i=j}^{n}{s_{i}}+n-j+1-\sum_{i=j+1}^{n}{k_{i}} \atop k_{j} \right)\frac{(\alpha_j+a_j)^{k_j} }{ \left(-\sum_{i=j}^{n}{s_{i}}+n-j+1-\sum_{i=j+1}^{n}{k_{i}}\right)}}}.&
\end{array}
\end{equation}
But, for all $2\leq j\leq n$ we have
\begin{equation}
(\alpha_j+a_j)^{k_j}= \sum_{v_j \in \mathbb{N}\atop v_j\leq k_j}{\left(k_j\atop v_j\right)\alpha_j^{k_j-v_j} a_{j}^{v_j}}
\end{equation}
and
\begin{equation}
(\alpha_1+a_1)^{-\sum_{i=1}^{n}{s_{i}}+n-\sum_{i=2}^{n}{k_{i}}}= \sum_{v_1 \in \mathbb{N}\atop v_1\leq \left(-\sum_{i=1}^{n}{s_{i}}+n-\sum_{i=2}^{n}{k_{i}}\right)}{\left(-\sum_{i=1}^{n}{s_{i}}+n-\sum_{i=2}^{n}{k_{i}}\atop v_1\right)\alpha_1^{\left(-\sum_{i=1}^{n}{s_{i}}+n-\sum_{i=2}^{n}{k_{i}}\right)} a_{1}^{v_1}}
\end{equation}
which yields
\begin{equation}
\begin{array}{ccc}
&Y_{n,\underline{a}}(\underline{\alpha};  \underline{\textbf{s}})=(-1)^n&\\
& \sum_{\underline{k}=(k_2,...,k_n) \in \mathbb{N}^{n-1}}{\sum_{\underline{v}=(v_1,...,v_n)\in \mathbb{N}^{n}\atop v_j\leq k_j\; \forall\; 2\leq j\leq n; v_1\leq \left(-\sum_{i=1}^{n}{s_{i}}+n-\sum_{i=2}^{n}{k_{i}}\right)}\frac{A(\underline{\textbf{s}})\; a_{1}^{v_1}}{\left(-\sum_{i=1}^{n}{s_{i}}+n-\sum_{i=2}^{n}{k_{i}}\right)}\prod_{j=2}^{n}{\frac{ a_j^{v_j}}{\left(-\sum_{i=j}^{n}{s_{i}}+n-j+1-\sum_{i=j+1}^{n}{k_{i}}\right)}}}.
\end{array}
\end{equation}
with
\begin{equation}
A(\underline{\textbf{s}})= \left(-\sum_{i=1}^{n}{s_{i}}+n-\sum_{i=2}^{n}{k_{i}}\atop v_1\right) \alpha_1^{\left(-\sum_{i=1}^{n}{s_{i}}+n-\sum_{i=2}^{n}{v_{i}}\right)} \prod_{j=2}^{n}{\left(-\sum_{i=j}^{n}{s_{i}}+n-j+1-\sum_{i=j+1}^{n}{k_{i}} \atop k_{j} \right)\left(k_j \atop v_j\right) \alpha_j^{k_j-v_j}}.
\end{equation}
Setting $\underline{\textbf{s}}=- \underline{\textbf{N}}=-(N_1,...,N_n) \in \mathbb{N}^{n}$
yields~(\ref{Ya}) and ends the proof of Proposition~\ref{inter-estim}. 
\end{proof} 

%%%%%%%%%%%%%%%%%%%%%%%%%%%%%%%%%%%%%%%%%%%%%%%%%%%%%%%%%%%%%%%%%%%%%%%%

\section{Proof of Theorem~\ref{theo2}}
\label{4}
The proof relies on the \textit{Raabe} formula~\cite{friedman04}, which expresses the integral in terms of the sum.
\begin{proposition}\

\label{proRaab}
 
\begin{enumerate}
\item[(1)] \textit{Raabe} formula:\

for all $\underline{\textbf{s}} \in \mathbb{C}^{n}$, outside the possible polar divisors of $Y_{n}(\underline{\alpha}; \underline{\textbf{s}})$, we have:
\begin{equation}
 Y_{n}(\underline{\alpha}; \underline{\textbf{s}})= \int_{\underline{\textbf{t}} \in [0,1]^{n}}{\zeta_{n,\underline{\textbf{t}}}(\underline{\alpha}; \underline{\textbf{s}})\; d\underline{\textbf{t}}}
\end{equation}
where:  $$\zeta_{n,\underline{\textbf{t}}}(\underline{\alpha}; \underline{\textbf{s}})= \sum_{\underline{m} \in \mathbb{N}^{n}}{\prod_{i=1}^{n}{\frac{1}{\left((m_{1}+t_{1})+\dots +(m_{i}+t_{i}+\alpha_i)\right)^{s_{i}}}}}$$ and $d{\underline{\textbf{t}}}$ is the Lebesgue measure on $\mathbb{R}^{n}$.
\item[(2)] For a fixed point $\underline{\textbf{N}}= (N_{1},..., N_{n})$ in $\mathbb{N}^{n}$ the maps $\displaystyle{\underline{\textbf{a}} \mapsto Y_{n,\underline{a}}(\underline{\alpha}; -\underline{\textbf{N}})}$ and 
$\displaystyle{\underline{\textbf{a}} \mapsto \zeta_{n,\underline{\textbf{t}}}(\underline{\alpha}; -\underline{\textbf{N}})}$ 
are polynomials in $\underline{\textbf{a}}= (a_{1},..., a_{n}) \in \mathbb{R}_{+}^{n}$. 
\end{enumerate}
\end{proposition}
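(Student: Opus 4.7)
The plan is to treat the two assertions separately, with the bulk of the work going into the Raabe identity.

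For part (1), I would establish the identity by the standard lattice-unfolding argument. Starting from
\begin{equation*}
Y_{n}(\underline{\alpha};\underline{\textbf{s}})=\int_{[0,+\infty[^{n}}\prod_{i=1}^{n}(x_{1}+\cdots+x_{i}+\alpha_{i})^{-s_{i}}\,d\underline{x},
\end{equation*}
I decompose the domain as the disjoint union $[0,+\infty[^{n}=\bigsqcup_{\underline{m}\in\mathbb{N}^{n}}\prod_{i=1}^{n}[m_{i},m_{i}+1[$ and change variables $x_{i}=m_{i}+t_{i}$ on each unit cube, with $t_{i}\in[0,1[$. The Jacobian is $1$, and the integrand becomes $\prod_{i}((m_{1}+t_{1})+\cdots+(m_{i}+t_{i})+\alpha_{i})^{-s_{i}}$. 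Summing over $\underline{m}$ and interchanging sum and integral yields the right-hand side $\int_{[0,1]^{n}}\zeta_{n,\underline{t}}(\underline{\alpha};\underline{\textbf{s}})\,d\underline{t}$. The interchange is the one delicate point: I would justify it on a common right half-plane where both $Y_{n}$ and the series defining $\zeta_{n,\underline{t}}$ converge absolutely (uniformly in $\underline{t}\in[0,1]^{n}$ by a straightforward integral-test majoration), and then extend the identity to the entire meromorphic domain by the analytic continuation of \cite{matsumoto03}, which guarantees both sides are meromorphic with matching polar structure.

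For part (2), the polynomial character of $\underline{a}\mapsto Y_{n,\underline{a}}(\underline{\alpha};-\underline{\textbf{N}})$ is immediate from the closed form established in Proposition~\ref{inter-estim}: the outer sum runs over the finite set $T(\underline{\textbf{N}})$ and over $\underline{v}$ bounded componentwise by $\underline{k}$ and by $\sum_{i}N_{i}+n-\sum_{i\geq 2}k_{i}$, and every term contributes a monomial $a_{1}^{v_{1}}\cdots a_{n}^{v_{n}}$ with coefficient depending only on $\underline{\textbf{N}}$ and $\underline{\alpha}$. For the zeta analogue, interpreted as the $\underline{a}$\nobreakdash-shifted series $\zeta_{n,\underline{t},\underline{a}}(\underline{\alpha};\underline{\textbf{s}})=\sum_{\underline{m}}\prod_{i}((m_{1}+t_{1})+\cdots+(m_{i}+t_{i})+a_{1}+\cdots+a_{i})^{-s_{i}}$, I would observe that $\zeta_{n,\underline{t},\underline{a}}(\underline{\alpha};\underline{\textbf{s}})$ is literally of the form $\zeta_{n,\underline{t}}(\underline{\alpha}+\underline{a}';\underline{\textbf{s}})$ with $\underline{a}'$ a linear function of $\underline{a}$, so the problem reduces to polynomiality in the base point. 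This last fact can be obtained either by mimicking the inductive computation of Proposition~\ref{inter-estim} with binomial expansions at each stage, or by invoking the Raabe identity of part~(1) in its $\underline{a}$\nobreakdash-dependent version $Y_{n,\underline{a}}=\int_{[0,1]^{n}}\zeta_{n,\underline{t},\underline{a}}\,d\underline{t}$ and differentiating under the integral in the $a_{j}$'s to read off a polynomial bound from the polynomial bound on the left-hand side.

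The main obstacle is the $\sum$--$\int$ interchange in the Raabe step together with the transfer of the identity past the polar divisors of $Y_{n}$ and $\zeta_{n,\underline{t}}$; once this is clean, both polynomiality statements are essentially bookkeeping on the explicit formulas. Everything else (the change of variables, the binomial expansions, the finiteness of $T(\underline{\textbf{N}})$) is routine.
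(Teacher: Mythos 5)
Your argument for part (1) is the same unit-cube unfolding the paper itself uses (the paper merely runs it in the opposite direction, starting from $\int_{[0,1]^{n}}\zeta_{n,\underline{\textbf{t}}}\,d\underline{\textbf{t}}$ and folding the translated cubes back into $[0,+\infty[^{n}$), with the identical justification: absolute convergence on a common right half-plane, then analytic continuation past the polar divisors. For part (2) you, like the paper, read off polynomiality of $\underline{a}\mapsto Y_{n,\underline{a}}(\underline{\alpha};-\underline{\textbf{N}})$ from the finite explicit formula of Proposition~\ref{inter-estim} and transfer it to the shifted zeta function via the Raabe identity; your closing mechanisms for that transfer (base-point shifting, or differentiating under the integral) are admittedly sketchy, but the paper's entire proof of this half is the one line ``follows from~(\ref{Ya}) combined with the Raabe formula,'' so your proposal matches the paper's approach and is no less complete than the original.
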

\begin{proof}\
\begin{enumerate}
\item[(1)]
Let $\underline{\textbf{s}} \in \mathbb{C}^{n}$ be chosen in such a way that the integral function and the generalized multiple Hurwitz zeta function are  absolutely convergent.
\newline
Thus, for $\underline{\textbf{t}} \in \mathbb{R}_{+}^{n}$, we have:
\begin{eqnarray*}
%\begin{array}{ccc}
\int_{[0,1]^{n}}{\zeta_{n,\underline{\textbf{t}}}(\underline{\alpha}; \underline{\textbf{s}})\; d\underline{\textbf{t}}} &=& \int_{[0,1]^{n}}{\sum_{\underline{\textbf{m}} \in \mathbb{N}^{n}}{\prod_{i=1}^{n}{(t_{1}+\dots +t_{i}+ m_{1}+\dots +m_{i}+\alpha_i)^{-s_{i}}\; d\underline{\textbf{t}}}}}\\
&=& \sum_{\underline{\textbf{m}}=(m_{1},..., m_{n}) \in \mathbb{N}^{n}}{\quad\int_{\prod_{i=1}^{n}{[m_{i},m_{i}+1]}}{\quad\prod_{i=1}^{n}{(t_{1}+\dots +t_{i}+ m_{1}+\dots +m_{i}+\alpha_i)^{-s_{i}}}}}\; d\underline{\textbf{t}}\\
&=& \int_{[0, +\infty[^{n}}{\prod_{i=1}^{n}{(x_{1}+\dots +x_{i}+\alpha_i)^{-s_{i}}\; d\underline{\textbf{x}}}}= Y_{n}(\underline{\alpha}; \underline{\textbf{s}}).
%\end{array}
\end{eqnarray*}
\newline
This last equality which is verified for all $\underline{\textbf{s}} \in \mathbb{C}^{n}$ follows by analytic continuation outside the polar divisors.
\item[(2)]
follows from~(\ref{Ya}) combined with the \textit{Raabe} formula.
\end{enumerate}
\end{proof}
 
\begin{lemma}[\cite{friedman}]\

 Let $P$ and $Q$ to be two polynomials in $n$ variables linked by
\begin{equation}
\label{eqP1}
 P(\underline{\textbf{a}})= \int_{\underline{\textbf{t}} \in [0,1]^{n}}{Q(\underline{\textbf{a}}+ \underline{\textbf{t}})\; d\underline{\textbf{t}}}.
\end{equation}
Write out 
\begin{equation} 
 \label{eqP2}
P(\underline{\textbf{a}})= P(a_{1},..., a_{n})= \sum_{\underline{\textbf{L}}}{h_{\underline{\textbf{L}}}\; \prod_{i=1}^{n}{a_{i}^{L_{i}}}}
\end{equation}
where $h_{\underline{\textbf{L}}} \in \mathbb{C}$ and  $\underline{\textbf{L}}= (L_{1},..., L_{n}) \in \mathbb{N}^{n}$ ranges over a finite set of multi-index. Then
\begin{equation}
\label{eqQ}
 Q(\underline{\textbf{a}}) = Q(a_{1},..., a_{n})= \sum_{\underline{\textbf{L}}}{h_{\underline{\textbf{L}}}\; \prod_{i=1}^{n}{B_{L_{i}}(a_{i})}}
\end{equation}
where the $B_{L_{i}}(a_{i})$ are the Bernoulli polynomials~\cite{apostol76}.
\newline
Conversely, if $Q$ is given by~(\ref{eqQ}), then the relations~(\ref{eqP1}) and (\ref{eqP2}) yield equivalent formulas for  the polynomial $P$.
\end{lemma}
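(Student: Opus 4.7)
The approach reduces the whole lemma to the classical one-dimensional integral identity
$$\int_0^1 B_k(a+t)\, dt = a^k \qquad (k \geq 0),$$
which follows at once from $B_k'(x) = k B_{k-1}(x)$ together with $\int_0^1 B_k(t)\,dt = 0$ for $k\geq 1$ and $B_0 \equiv 1$. By Fubini this promotes to the multivariate version
$$\int_{[0,1]^n} \prod_{i=1}^n B_{L_i}(a_i + t_i)\, d\underline{\textbf{t}} = \prod_{i=1}^n a_i^{L_i}.$$
This identity is the entire engine of the lemma; once it is in hand, both directions reduce to basis-change statements on $\mathbb{C}[a_1,\dots,a_n]$.

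I would first dispatch the converse direction, since it is immediate. Assuming $Q$ is given by~(\ref{eqQ}), linearity of the integral and the displayed multivariate identity yield
$$\int_{[0,1]^n} Q(\underline{\textbf{a}} + \underline{\textbf{t}})\, d\underline{\textbf{t}} = \sum_{\underline{\textbf{L}}} h_{\underline{\textbf{L}}} \int_{[0,1]^n} \prod_{i=1}^n B_{L_i}(a_i + t_i)\, d\underline{\textbf{t}} = \sum_{\underline{\textbf{L}}} h_{\underline{\textbf{L}}} \prod_{i=1}^n a_i^{L_i},$$
and the right-hand side is exactly $P(\underline{\textbf{a}})$ as given by~(\ref{eqP2}). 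So~(\ref{eqP1}) and~(\ref{eqP2}) are compatible with the same coefficients $h_{\underline{\textbf{L}}}$.

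For the forward direction, I would exploit that the family $\{\prod_{i=1}^n B_{L_i}(a_i)\}_{\underline{\textbf{L}} \in \mathbb{N}^n}$ is a $\mathbb{C}$-basis of $\mathbb{C}[a_1,\dots,a_n]$, since each $B_L$ is monic of degree $L$, so the one-variable family is triangular relative to the monomial basis. Given the polynomial $Q$, expand it uniquely as $Q(\underline{\textbf{a}}) = \sum_{\underline{\textbf{L}}} c_{\underline{\textbf{L}}} \prod_i B_{L_i}(a_i)$. Applying the already-proven converse direction to this expansion gives $P(\underline{\textbf{a}}) = \sum_{\underline{\textbf{L}}} c_{\underline{\textbf{L}}} \prod_i a_i^{L_i}$; comparing with the assumed monomial expansion~(\ref{eqP2}) and using uniqueness of monomial coefficients forces $c_{\underline{\textbf{L}}} = h_{\underline{\textbf{L}}}$, which is exactly~(\ref{eqQ}).

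The main obstacle, really the only nontrivial ingredient, is establishing the one-dimensional Bernoulli identity $\int_0^1 B_k(a+t)\, dt = a^k$. I would verify it by differentiating both sides in $a$: the left side gives $B_k(a+1)-B_k(a) = k a^{k-1}$ via the standard difference equation for Bernoulli polynomials, matching the derivative of the right side; and the two sides agree at $a=0$ because $\int_0^1 B_k(t)\, dt = 0$ for $k\geq 1$ (with $k=0$ trivial). After that, everything is transparent: the averaging operator $Q \mapsto P$ sends $\prod_i B_{L_i}(a_i)$ to $\prod_i a_i^{L_i}$, so in these two mutually triangular bases it acts as the identity on coefficient vectors, and the lemma is just a reading of this diagonalization.
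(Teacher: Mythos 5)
Your proof is correct. Note that the paper itself offers no proof of this lemma at all --- it is imported wholesale by citation from the Friedman--Pereira reference --- so there is no internal argument to compare against; your write-up is, in substance, the standard proof used in that source: the identity $\int_0^1 B_k(a+t)\,dt=\int_a^{a+1}B_k(u)\,du=a^k$, promoted to several variables by Fubini, shows the averaging operator $Q\mapsto P$ carries the basis $\bigl\{\prod_{i=1}^n B_{L_i}(a_i)\bigr\}$ of $\mathbb{C}[a_1,\dots,a_n]$ (a basis by monicity/triangularity, as you observe) to the monomial basis, and both directions then follow from uniqueness of coefficients exactly as you argue. The one step that needed care --- deducing the forward direction from the converse via the unique Bernoulli-basis expansion of $Q$ --- you handled correctly, so the proposal stands as a complete, self-contained proof of a statement the paper leaves unproved.
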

\begin{proposition}\
\label{pro4}
 If we write out the polynomial  $Y_{\underline{\textbf{a}}}(\underline{\alpha}; -\underline{\textbf{N}})$ as a sum of monomials,
\begin{equation*}
 Y_{\underline{\textbf{a}}}(\underline{\alpha}; -\underline{\textbf{N}})= \sum_{\underline{\textbf{L}}}{C_{\underline{\textbf{L}}}\; 
\underline{\textbf{a}}^{\underline{\textbf{L}}}}
\end{equation*}
with $\displaystyle{\underline{\textbf{a}}^{\underline{\textbf{L}}}= \prod_{i=1}^{n}{a_{i}^{L_{i}}}}$ and $C_{\underline{\textbf{L}}}= C_{\underline{\textbf{L}}}(\underline{\textbf{N}}) \in \mathbb{C}$.
\newline
Then
\begin{equation*}
 \zeta_n(\underline{\alpha}; -\underline{\textbf{N}})= \sum_{\underline{\textbf{L}}}{C_{\underline{\textbf{L}}}\; 
B_{\underline{\textbf{L}}}}
\end{equation*}
where $\displaystyle{B_{\underline{\textbf{L}}}= \prod_{i=1}^{n}}{B_{L_{i}}}$ is a product of Bernoulli numbers.
\newline
More generally, for $\underline{\textbf{a}}= (a_{1},..., a_{n}) \in \mathbb{R}_{+}^{n}$, we have:
\begin{equation*}
  \zeta_{n,\underline{\textbf{a}}}(\underline{\alpha}; -\underline{\textbf{N}})= \sum_{\underline{\textbf{L}}}{C_{\underline{\textbf{L}}}\; 
B_{\underline{\textbf{L}}}(\underline{\textbf{a}})}
\end{equation*}
where $\displaystyle{B_{\underline{\textbf{L}}}(\underline{\textbf{a}})= \prod_{i=1}^{n}}{B_{L_{i}}}(a_{i})$ is a product of Bernoulli numbers.
\end{proposition}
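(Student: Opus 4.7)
The plan is to combine the shifted form of Raabe's formula with the Friedman lemma quoted just above the statement. First, I would upgrade the Raabe identity of Proposition~\ref{proRaab}(1) to a \emph{shifted} version, namely
\begin{equation*}
Y_{n,\underline{\textbf{a}}}(\underline{\alpha};\underline{\textbf{s}})=\int_{\underline{\textbf{t}}\in[0,1]^{n}}\zeta_{n,\underline{\textbf{a}}+\underline{\textbf{t}}}(\underline{\alpha};\underline{\textbf{s}})\,d\underline{\textbf{t}},
\end{equation*}
valid for all $\underline{\textbf{s}}\in\mathbb{C}^{n}$ outside the polar divisors. In the region of absolute convergence this follows by exactly the same slicing of $[0,+\infty[^{n}$ into the unit cubes $\prod_{i}[m_{i},m_{i}+1]$ as in the proof of Proposition~\ref{proRaab}(1); the shift $\underline{\textbf{a}}$ is merely carried along through the change of variables. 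The identity then extends to the whole meromorphic domain by analytic continuation.

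Next I would observe that both sides of the shifted identity, evaluated at $\underline{\textbf{s}}=-\underline{\textbf{N}}$, are polynomials in $\underline{\textbf{a}}\in\mathbb{R}_{+}^{n}$: on the left this is exactly Proposition~\ref{inter-estim}, and on the right it follows from Proposition~\ref{proRaab}(2). Writing
\begin{equation*}
Y_{n,\underline{\textbf{a}}}(\underline{\alpha};-\underline{\textbf{N}})=\sum_{\underline{\textbf{L}}}C_{\underline{\textbf{L}}}\,\underline{\textbf{a}}^{\underline{\textbf{L}}},
\end{equation*}
the shifted Raabe identity becomes precisely the hypothesis~(\ref{eqP1}) of the Friedman lemma, with $P(\underline{\textbf{a}})=Y_{n,\underline{\textbf{a}}}(\underline{\alpha};-\underline{\textbf{N}})$ and $Q(\underline{\textbf{a}})=\zeta_{n,\underline{\textbf{a}}}(\underline{\alpha};-\underline{\textbf{N}})$. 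The converse direction of that lemma then yields
\begin{equation*}
\zeta_{n,\underline{\textbf{a}}}(\underline{\alpha};-\underline{\textbf{N}})=\sum_{\underline{\textbf{L}}}C_{\underline{\textbf{L}}}\prod_{i=1}^{n}B_{L_{i}}(a_{i}),
\end{equation*}
which is the second (more general) formula of the proposition. Specializing to $\underline{\textbf{a}}=\underline{0}$ and using $B_{L_{i}}(0)=B_{L_{i}}$ gives the first formula, since $\zeta_{n,\underline{0}}(\underline{\alpha};-\underline{\textbf{N}})=\zeta_{n}(\underline{\alpha};-\underline{\textbf{N}})$.

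The main obstacle I expect is the justification of the shifted Raabe formula at the non-positive point $\underline{\textbf{s}}=-\underline{\textbf{N}}$: in the region of absolute convergence the partition argument is routine, but to invoke the Friedman lemma one needs the identity as a \emph{polynomial} identity in $\underline{\textbf{a}}$, not just pointwise almost everywhere. To handle this I would rely on the meromorphic continuation results of~\cite{matsumoto03}, which makes both sides holomorphic in $\underline{\textbf{s}}$ off the polar divisors and allows evaluation at $\underline{\textbf{s}}=-\underline{\textbf{N}}$ whenever the hypothesis of Theorem~\ref{theo2} holds; continuity of both sides in $\underline{\textbf{a}}$ then promotes the identity at $-\underline{\textbf{N}}$ to an equality of polynomials. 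Once that technical point is settled, everything else is a direct application of the Friedman lemma and no further computation is needed.
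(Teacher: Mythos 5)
Your proposal is correct and follows essentially the same route as the paper: the paper's proof is a one-line invocation of the Friedman lemma with $P(\underline{\textbf{a}})=Y_{n,\underline{\textbf{a}}}(\underline{\alpha};-\underline{\textbf{N}})$ and $Q(\underline{\textbf{a}})=\zeta_{n,\underline{\textbf{a}}}(\underline{\alpha};-\underline{\textbf{N}})$, which is exactly your step three. The only difference is that you explicitly state and justify the shifted Raabe identity $Y_{n,\underline{\textbf{a}}}=\int_{[0,1]^n}\zeta_{n,\underline{\textbf{a}}+\underline{\textbf{t}}}\,d\underline{\textbf{t}}$ needed to verify hypothesis~(\ref{eqP1}), a prerequisite the paper leaves implicit, so your write-up is if anything more complete.
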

\begin{proof}\
It follows from the above lemma, with $P(\underline{a})= Y_{n,\underline{a}}(\underline{\alpha}; -\underline{\textbf{N}})$
and $\displaystyle{Q(\underline{a})= \zeta_{n,\underline{a}}(\underline{\alpha}; -\underline{\textbf{N}})}$.
\end{proof}
\subsection{Proof of Theorem~\ref{theo2}:}
Relation~(\ref{Ya}) shows that for all $\underline{a} \in \mathbb{R}_{+}^{n}$
\begin{equation}
%\label{Ya}
\begin{array}{ccc}
&Y_{n,\underline{a}}(\underline{\alpha};  -\underline{\textbf{N}})=(-1)^n&\\
& \sum_{\underline{k}=(k_2,...,k_n) \in \mathbb{N}^{n-1}}{\sum_{\underline{v}=(v_1,...,v_n)\in \mathbb{N}^{n}\atop v_j\leq k_j\; \forall\; 2\leq j\leq n; v_1\leq \left(-\sum_{i=1}^{n}{s_{i}}+n-\sum_{i=2}^{n}{k_{i}}\right)}\frac{A(-\underline{\textbf{N}})\; a_{1}^{v_1}}{\left(\sum_{i=1}^{n}{N_{i}}+n-\sum_{i=2}^{n}{k_{i}}\right)}\prod_{j=2}^{n}{\frac{ a_j^{v_j}}{\left(\sum_{i=j}^{n}{N_{i}}+n-j+1-\sum_{i=j+1}^{n}{k_{i}}\right)}}}.
\end{array}
\end{equation}
with
\begin{equation}
A(-\underline{\textbf{N}})= \left(\sum_{i=1}^{n}{N_{i}}+n-\sum_{i=2}^{n}{k_{i}}\atop v_1\right) \alpha_1^{\left(\sum_{i=1}^{n}{N_{i}}+n-\sum_{i=2}^{n}{v_{i}}\right)} \prod_{j=2}^{n}{\left(\sum_{i=j}^{n}{N_{i}}+n-j+1-\sum_{i=j+1}^{n}{k_{i}} \atop k_{j} \right)\left(k_j \atop v_j\right) \alpha_j^{k_j-v_j}}.
\end{equation}
and
\begin{equation}
T(\underline{\textbf{N}}):= \left\{\underline{k}=(k_2,...,k_n) \in \mathbb{N}^{n-1}: \quad 0\leq k_{j}\leq \sum_{i=j}^{n}{N_{i}}+n-j+1-\sum_{i=j+1}^{n}{k_{i}}, \; \forall\; 2\leq j\leq n\right\}.
\end{equation}
Setting, 
\begin{equation}
\underline{a}^{\underline{v}}= \prod_{j=1}^{n}{a_j^{v_j}}
\end{equation}
this gives
\begin{equation}
%\label{Ya}
\begin{array}{ccc}
&Y_{n,\underline{a}}(\underline{\alpha}; -\underline{\textbf{N}})=(-1)^{n}&\\
& \sum_{\underline{k}=(k_2,...,k_n) \in \mathbb{N}^{n-1}}{\sum_{\underline{v}=(v_1,...,v_n)\in \mathbb{N}^{n}\atop v_j\leq k_j\; \forall\; 2\leq j\leq n; v_1\leq \left(\sum_{i=1}^{n}{N_{i}}+n-\sum_{i=2}^{n}{k_{i}}\right)}A(-\underline{\textbf{N}})\; \underline{a}^{\underline{v}} \prod_{j=1}^{n}{\frac{ 1}{\left(\sum_{i=j}^{n}{N_{i}}+n-j+1-\sum_{i=j+1}^{n}{k_{i}}\right)}}}.
\end{array}
\end{equation}
It follows from Proposition~\ref{pro4} that
\begin{equation}
\begin{array}{ccc}
&\zeta_n(\underline{\alpha}; -\underline{\textbf{N}})=(-1)^n&\\
& \sum_{\underline{k}=(k_2,...,k_n) \in \mathbb{N}^{n-1}}{\sum_{\underline{v}=(v_1,...,v_n)\in \mathbb{N}^{n}\atop v_j\leq k_j\; \forall\; 2\leq j\leq n; v_1\leq \left(\sum_{i=1}^{n}{N_{i}}+n-\sum_{i=2}^{n}{k_{i}}\right)}A(-\underline{\textbf{N}})\; B_{\underline{v}} \prod_{j=1}^{n}{\frac{1 }{\left(\sum_{i=j}^{n}{N_{i}}+n-j+1-\sum_{i=j+1}^{n}{k_{i}}\right)}}}
\end{array}
\end{equation}
with
\begin{equation*}
B_{\underline{v}}= \prod_{j=1}^{n}{B_{v_{j}}}
\end{equation*}
and $B_{v_{j}}$ is the ${v_{j}}^{-th}$ Bernoulli number, which ends the proof of Theorem~\ref{theo2}.

%%%%%%%%%%%%%%%%%%%%%%%%%%%%%%%%%%%%%%%%%%%%%%%%%%%%%%%%%%%%%%%%%%%%%%%%%%%%%%

\section{Two special cases}
In this part, we give two applications of this results:
\subsection{Multiple Hurwitz zeta values at non positive integers}
In this paragraph, we give the values of the classical multiple zeta values at non positive integers.

So, for $\underline{\alpha}:=(\alpha,\dots,\alpha)$, where $\alpha \neq 0,-1,-2,\dots,$, we find
\begin{equation}
\zeta_n(\underline{\alpha}; \underline{\textbf{s}})= \zeta_n( \alpha; s_{1}, \dots, s_{n})= \sum_{\underline{m}=(m_1,\dots,m_n) \in \mathbb{N}^{n}}{\prod_{i=1}^{n}{\frac{1}{(m_{1}+\dots +m_{i}+\alpha)^{s_{i}}}}}
\end{equation}
which is the multiple Hurwitz zeta function.

Thus, if we apply Theorem~\ref{theo2}, we find the same result given in~\cite{sadaoui19}.
\begin{corollary}\
	
	%\label{theo2}
	Let $\underline{\textbf{N}}= (N_{1},\dots,N_{n})$ a point of $\mathbb{N}^{n}$, if  the point $(\underline{\textbf{s}}=-\underline{\textbf{N}})$ is not a polar divisor for the integral function $Y_n(\alpha; \underline{\textbf{s}})$, then the value of the multiple Hurwitz zeta function $\zeta_n(\alpha; \underline{\textbf{s}})$ at the point $(\underline{\textbf{s}}=-\underline{\textbf{N}})$ exists and is given by
	\begin{equation}
	\begin{array}{ccc}
	&\zeta_n(\alpha; -\underline{\textbf{N}})=(-1)^n&\\
	& \sum_{\underline{k}=(k_2,...,k_n) \in \mathbb{N}^{n-1}}{\sum_{\underline{v}=(v_1,...,v_n)\in \mathbb{N}^{n}\atop v_j\leq k_j\; \forall\; 2\leq j\leq n; v_1\leq \left(\sum_{i=1}^{n}{N_{i}}+n-\sum_{i=2}^{n}{k_{i}}\right)}A(-\underline{\textbf{N}})\; B_{\underline{v}} \prod_{j=1}^{n}{\frac{1 }{\left(\sum_{i=j}^{n}{N_{i}}+n-j+1-\sum_{i=j+1}^{n}{k_{i}}\right)}}}
	\end{array}
	\end{equation}
	with
	\begin{equation}
	A(-\underline{\textbf{N}})= \left(\sum_{i=1}^{n}{N_{i}}+n-\sum_{i=2}^{n}{k_{i}}\atop v_1\right) \alpha^{\left(\sum_{i=1}^{n}{N_{i}}+n-\sum_{i=2}^{n}{v_{i}}\right)}\prod_{j=2}^{n}{\left(\sum_{i=j}^{n}{N_{i}}+n-j+1-\sum_{i=j+1}^{n}{k_{i}} \atop k_{j} \right)\left(k_j \atop v_j\right) }.
	\end{equation}
	and
	\begin{equation*}
	T(\underline{\textbf{N}}):= \left\{\underline{k}=(k_2,...,k_n) \in \mathbb{N}^{n-1}: \quad 0\leq k_{j}\leq \sum_{i=j}^{n}{N_{i}}+n-j+1-\sum_{i=j+1}^{n}{k_{i}}, \; \forall\; 2\leq j\leq n\right\}.
	\end{equation*}
	and
	\begin{equation*}
	B_{\underline{v}}= \prod_{j=1}^{n}{B_{v_{j}}}
	\end{equation*}
	where $B_{v_{j}}$ is the ${v_{j}}^{-th}$ Bernoulli number.
\end{corollary}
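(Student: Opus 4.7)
The plan is to assemble three ingredients already in place in the paper: Raabe's formula (Proposition~\ref{proRaab}(1)), the explicit polynomial expansion of $Y_{n,\underline{a}}(\underline{\alpha}; -\underline{\textbf{N}})$ in the auxiliary vector $\underline{a}$ (Proposition~\ref{inter-estim}), and the Bernoulli-polynomial lemma quoted from Friedman, which inverts the averaging operator $P(\underline{a})=\int_{[0,1]^n} Q(\underline{a}+\underline{t})\,d\underline{t}$ by replacing each monomial $a_i^{L_i}$ in $P$ with $B_{L_i}(a_i)$ in $Q$. The theorem will then drop out by evaluating the resulting polynomial identity at $\underline{a}=\underline{0}$ and using $B_L(0)=B_L$.

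First, I would promote Raabe's formula to the shifted setting, writing $Y_{n,\underline{a}}(\underline{\alpha};\underline{\textbf{s}}) = \int_{[0,1]^n} \zeta_{n,\underline{a}+\underline{t}}(\underline{\alpha};\underline{\textbf{s}})\,d\underline{t}$. This is precisely the collapsing-of-integrals argument carried out in Proposition~\ref{proRaab}(1), applied after absorbing the shift $\underline{a}$ into the summation index; absolute convergence in a suitable right half-plane justifies the exchange of sum and integral, and analytic continuation then propagates the identity to the point $\underline{\textbf{s}}=-\underline{\textbf{N}}$ under the standing assumption that this point is not a polar divisor of $Y_n$. Thanks to Proposition~\ref{proRaab}(2), both sides are in fact polynomials in $\underline{a}$ at $\underline{\textbf{s}}=-\underline{\textbf{N}}$, so the identity becomes a polynomial identity that we may treat algebraically.

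Second, I would read off the monomial expansion of $Y_{n,\underline{a}}(\underline{\alpha}; -\underline{\textbf{N}})$ directly from formula~(\ref{Ya}): the exponent multi-index $\underline{L}=(L_1,\dots,L_n)$ corresponds to $\underline{v}=(v_1,\dots,v_n)$, and the coefficient $C_{\underline{L}}$ is exactly what multiplies $\underline{a}^{\underline{v}}$ on the right-hand side of~(\ref{Ya}), namely $(-1)^n A(-\underline{\textbf{N}})\prod_{j=1}^{n}\bigl(\sum_{i=j}^{n} N_i+n-j+1-\sum_{i=j+1}^{n} k_i\bigr)^{-1}$. Invoking Proposition~\ref{pro4} (which is the Bernoulli-polynomial lemma specialised to $P=Y_{n,\underline{a}}$ and $Q=\zeta_{n,\underline{a}}$), we conclude that $\zeta_{n,\underline{a}}(\underline{\alpha};-\underline{\textbf{N}}) = \sum_{\underline{L}} C_{\underline{L}}\, B_{\underline{L}}(\underline{a})$.

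Finally, specialising at $\underline{a}=\underline{0}$ and using $B_{L}(0)=B_L$ yields $\zeta_n(\underline{\alpha};-\underline{\textbf{N}})=\sum_{\underline{L}} C_{\underline{L}}\, B_{\underline{L}}$, which after substituting the closed form of $C_{\underline{L}}$ becomes the formula stated in Theorem~\ref{theo2}. The \emph{main obstacle} is essentially book-keeping: one must verify that the range of the summation indices $(\underline{k},\underline{v})$ advertised in the theorem statement agrees with the range produced by the polynomial expansion of $Y_{n,\underline{a}}$, and that the $\alpha$-dependent factors $\alpha_j^{k_j-v_j}$ and $\alpha_1^{\sum N_i+n-\sum v_i}$ aggregate correctly into $A(-\underline{\textbf{N}})$. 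No additional analytic difficulty arises beyond the analytic continuation step; the rest is a direct transfer from integral to series via the Bernoulli-polynomial substitution.
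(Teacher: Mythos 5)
Your argument is correct and follows the paper's own route: the paper obtains this corollary in one line by specializing Theorem~\ref{theo2} to $\underline{\alpha}=(\alpha,\dots,\alpha)$, and the chain you assemble (Raabe's formula, the polynomial expansion of $Y_{n,\underline{a}}$ from Proposition~\ref{inter-estim}, the Friedman Bernoulli-polynomial lemma, evaluation at $\underline{a}=\underline{0}$) is exactly the paper's proof of that theorem. The only step you leave implicit is the final specialization of all the $\alpha_i$ to the single value $\alpha$, which is immediate.
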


\subsection{Multiple zeta values at non positive integers}
Now, for $\underline{\alpha}:=(1,\dots,1)$,  we find
\begin{eqnarray}
\zeta_n(\underline{\alpha}; \underline{\textbf{s}})&=& \zeta_n( 1; s_{1}, \dots, s_{n})=\zeta_n(\underline{\textbf{s}}) =\sum_{\underline{m}=(m_1,\dots,m_n) \in \mathbb{N}^{n}}{\prod_{i=1}^{n}{\frac{1}{(m_{1}+\dots +m_{i}+1)^{s_{i}}}}}\\
&=&\sum_{\underline{m}=(m_1,\dots,m_n) \in \mathbb{N}^{*n}}{\prod_{i=1}^{n}{\frac{1}{(m_{1}+\dots +m_{i})^{s_{i}}}}}
\end{eqnarray}
which is the multiple zeta function.

Thus, if we apply Theorem~\ref{theo2}, we find the same result given in~\cite{sadaoui14}.
\begin{corollary}\
	
Let $\underline{\textbf{N}}= (N_{1},\dots,N_{n})$ a point of $\mathbb{N}^{n}$, if  the point $(\underline{\textbf{s}}=-\underline{\textbf{N}})$ is not a polar divisor for the integral function $Y(\underline{\textbf{s}})$, then the value of the multiple zeta function $Z(\underline{\textbf{s}})$ at the point $(\underline{\textbf{s}}=-\underline{\textbf{N}})$ exists and is given by~\footnote{This corrects a typo in\cite[Theorem 1]{sadaoui14}}.
\begin{equation}
\begin{array}{ccc}
&\zeta_n(-\underline{\textbf{N}})=(-1)^{n}&\\
& \sum_{\underline{k}=(k_2,...,k_n) \in T(\underline{\textbf{N}})}{\sum_{\underline{v}=(v_1,...,v_n)\in \mathbb{N}^{n}\atop v_j\leq k_j\; \forall\; 2\leq j\leq n; v_1\leq \left(\sum_{i=1}^{n}{N_{i}}+n-\sum_{i=2}^{n}{k_{i}}\right)}A(-\underline{\textbf{N}})\; B_{\underline{v}}\prod_{j=1}^{n}{\frac{1 }{\left(\sum_{i=j}^{n}{N_{i}}+n-j+1-\sum_{i=j+1}^{n}{k_{i}}\right)}}}&
\end{array}
\end{equation}
with
\begin{equation*}
A(-\underline{\textbf{N}})= \left(\sum_{i=1}^{n}{N_{i}}+n-\sum_{i=2}^{n}{k_{i}}\atop v_1\right)\prod_{j=2}^{n}{\left(\sum_{i=j}^{n}{N_{i}}+n-j+1-\sum_{i=j+1}^{n}{k_{i}} \atop k_{j} \right)\left(k_j \atop v_j\right)}
\end{equation*}

\begin{equation*}
T(\underline{\textbf{N}}):= \left\{\underline{k}=(k_2,...,k_n) \in \mathbb{N}^{n-1}: \quad 0\leq k_{j}\leq \sum_{i=j}^{n}{N_{i}}+n-j+1-\sum_{i=j+1}^{n}{k_{i}}, \; \forall\; 2\leq j\leq n\right\}.
\end{equation*}
and
\begin{equation*}
B_{\underline{v}}= \prod_{j=1}^{n}{B_{v_{j}}}
\end{equation*}
where $B_{v_{j}}$ is the ${v_{j}}^{-th}$ Bernoulli number.	
\end{corollary}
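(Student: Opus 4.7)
The plan is to deduce this corollary as an immediate specialization of Theorem~\ref{theo2} at $\underline{\alpha}=(1,1,\ldots,1)$, so the argument will be short and the main task is careful bookkeeping of what survives the substitution.

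First, I would verify the identification of the series. Setting $\alpha_i=1$ for all $i$ in the generalized multiple Hurwitz zeta function produces
\begin{equation*}
\sum_{\underline{m}\in \mathbb{N}^{n}}\prod_{i=1}^{n}\frac{1}{(m_{1}+\cdots+m_{i}+1)^{s_{i}}},
\end{equation*}
which, via the re-indexing already displayed in the preamble of the subsection, coincides with the classical multiple zeta function $\zeta_n(\underline{s})$. So applying Theorem~\ref{theo2} at this specific $\underline{\alpha}$ really does compute the object named in the corollary.

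Next, I would check that the polar-divisor hypothesis carries over without modification. By Lemma~\ref{theo1}(1) the polar-divisor locus depends only on $\underline{s}$ and on the auxiliary index $\underline{k}$, not on $\underline{\alpha}$; consequently the admissible set $T(\underline{\textbf{N}})$ is unchanged by the specialization, and the hypothesis ``$(\underline{\textbf{s}}=-\underline{\textbf{N}})$ is not a polar divisor for $Y_n$'' has exactly the same meaning here as in Theorem~\ref{theo2}. After this check, I would substitute $\alpha_1=1$ and $\alpha_j=1$ ($2\le j\le n$) directly into the explicit formula from Theorem~\ref{theo2}: the factor $\alpha_1^{(\sum N_i+n-\sum v_i)}$ collapses to $1$ and each $\alpha_j^{k_j-v_j}$ also collapses to $1$, leaving only the binomial coefficients that appear in the displayed $A(-\underline{\textbf{N}})$. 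The outer summation over $\underline{k}\in T(\underline{\textbf{N}})$, the sign $(-1)^n$, the product of reciprocals, and the Bernoulli product $B_{\underline{v}}=\prod_j B_{v_j}$ are inherited verbatim, yielding the stated formula.

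The only non-trivial point, and the footnote flags it explicitly, is confirming that this specialized formula corrects the typo in~\cite[Theorem 1]{sadaoui14}. I would therefore pair the final substitution step with a line-by-line comparison between the formula obtained here and the printed formula in~\cite{sadaoui14}, indicating precisely which binomial coefficient or index range has been adjusted. This is bookkeeping rather than new mathematics, but it is the one place where care is needed: the rest of the proof is a mechanical consequence of Theorem~\ref{theo2} together with the observation that $1$ raised to any integer power is $1$.
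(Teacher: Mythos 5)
Your proposal is correct and follows exactly the route the paper takes: the corollary is obtained by specializing Theorem~\ref{theo2} to $\underline{\alpha}=(1,\dots,1)$, whereupon every power of $\alpha_j$ collapses to $1$ and only the binomial coefficients, the sign $(-1)^n$, the set $T(\underline{\textbf{N}})$, and the Bernoulli product survive. Your additional checks (that the polar-divisor condition is independent of $\underline{\alpha}$, and the comparison with the earlier printed formula) are more care than the paper itself supplies, but the underlying argument is the same.
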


%%%%%%%%%%%%%%%%%%%%%%%%%%%%%%%%%%%%%%%%%%%%%%%%%%%%%%%%%%%%%%%%%%%%%%%%

\section*{Acknowledgement}

We thank the referees for their numerous and very helpful comments and suggestions which greatly contributed in improving the final presentation.

%%%%%%%%%%%%%%%%%%%%%%%%%%%%%%%%%%%%%%%%%%%%%%%%%%%%%%%%%%%%%%%%%%%%%%%%
%\newpage

\nocite{*}

\end{document}